\newtheorem {theorem}{Theorem}[section]
\newtheorem {prop}[theorem]{Proposition}
\newtheorem {lemma}[theorem]{Lemma}
\theoremstyle{definition}
\newtheorem{df}{Definition}[section]
\numberwithin{equation}{section}
\newcommand{\FF}{\Phi}
\newcommand{\grwhar}{h^{\we}}
\newcommand{\harm}{\mathcal{H}}
\newcommand{\sid}{\sigma_d}
\newcommand{\za}{\zeta}
\newcommand{\we}{w}
\newcommand{\PH}{\Phi}
\newcommand{\har}{h}
\newcommand{\hol}{H}
\newcommand{\spd}{\partial B_d}
\newcommand{\cn}{{\mathbb{C}}^m}
\newcommand{\rd}{{\mathbb{R}}^d}
\newcommand{\bd}{B_d}
\newcommand{\bbn}{{\mathbb B}_m}
\newcommand{\disk}{{\mathbb B}_1}
\newcommand{\Rbb}{\mathbb R}
\newcommand{\Nbb}{\mathbb N}
\newcommand{\Zbb}{\mathbb Z}
\begin{document}

\title{Harmonic approximation by finite sums of moduli}



\author{Evgueni Doubtsov}


\address{St.~Petersburg Department
of V.A.~Steklov Mathematical Institute, Fontanka 27,
St.~Petersburg 191023, Russia}

\email{dubtsov@pdmi.ras.ru}

\begin{abstract}
Let $h(B_d)$ denote the space of real-valued harmonic functions
on the unit ball $B_d$ of $\mathbb{R}^d$, $d\ge 2$.
Given a radial weight $w$ on $B_d$, consider the following problem:
construct a finite family $\{f_1, f_2, \dots, f_J\}$ in $h(B_d)$
such that the sum $|f_1| + |f_2|+\dots + |f_J|$ is equivalent to $w$.
We solve the problem for weights $w$ with a doubling property.
Moreover, if $d$ is even, then we characterize those
 $w$ for which the problem has a solution.
\end{abstract}

\thanks{The author was supported by the Russian Science Foundation (grant No. 14-41-00010).}

\maketitle

\section{Introduction}\label{s_int}
\subsection{Weight functions and radial weights}\label{ss_wfun_df}
By definition, $\we: [0, 1)\to (0, +\infty)$
is a weight function if $\we$
is a non-decreasing, continuous, unbounded function.
Let $B$ denote the unit ball of a real or complex Euclidean space.
We extend $\we$ to a radial weight on $B$ setting
$\we(z) = \we(|z|)$, $z\in B$.

For functions $u, v: B\to (0,+\infty)$, we write $u\asymp v$
and we say that $u$ and $v$ are equivalent if
\[
C_1 u(z) \le v(z) \le C_2 u(z), \quad z\in B,
\]
for some constants $C_1, C_2 >0$.
The definition of equivalent functions on $[0,1)$ is analogous.

\subsection{Main result}\label{ss_int_main}
Let $\har(\bd)$ denote
the space of real-valued harmonic functions on the unit ball $\bd$
of $\rd$, $d\ge 2$.
Harmonic approximation mentioned in the title of the present paper
refers to the following notion:

\begin{df}
A weight function $\we: [0, 1) \to (0, +\infty)$ is called \textit{harmonically approximable}
if there exists a finite family $\{f_1, f_2, \dots, f_J\}\subset\har(\bd)$
such that
\begin{equation}\label{e_apprx}
|f_1|+|f_2|+\dots + |f_J| \asymp \we,
\end{equation}
where $\we$ is extended to a radial weight on $\bd$.
Clearly, property~\eqref{e_apprx} does not change if $\we$
is replaced by an equivalent weight function.
\end{df}

To study harmonic approximation, we use doubling and log-convex weight functions.
A weight function $\we$
is called \textit{doubling} if there exists
a constant $A >1$ such that
\begin{equation}\label{e_reg_df_we}
\we(1 - s/2) \le A \we(1 - s),
\quad 0<s\le 1.
\end{equation}
A weight function $\we$
is called \textit{log-convex} if
$\log \we(r)$ is a convex function of $\log r$, $0<r<1$.
The doubling condition~\eqref{e_reg_df_we} restricts the growth of $\we$;
a log-convex weight function may grow arbitrarily rapidly.
Also, it is known that every doubling weight function
is equivalent to a log-convex one.

\begin{theorem}\label{t_main}
Let $d\ge 2$ and let $\we$ be a weight function on $[0,1)$.
\begin{itemize}
\item[(i)]
If $\we$ is harmonically approximable, then $\we$ is equivalent to a log-convex weight function.
\item[(ii)]
If $d$ is even, then $\we$ is harmonically approximable
if and only if  $\we$ is equivalent to a log-convex weight function.
\item[(iii)]
If $d$ is odd and $\we$ is doubling, then $\we$ is harmonically approximable.
\end{itemize}
\end{theorem}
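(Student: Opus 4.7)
For part (i), I would use the spherical harmonic expansion of each harmonic summand. Writing $f_j(r\xi)=\sum_k r^k Y_{j,k}(\xi)$ with spherical harmonics $Y_{j,k}$, orthogonality on $\spd$ yields
\[
\int_{\spd} f_j(r\xi)^2\,d\sigma(\xi)=\sum_k c_{j,k}\,r^{2k},\qquad c_{j,k}\ge 0.
\]
Since $W:=|f_1|+\cdots+|f_J|$ satisfies $\sum_j f_j^2\le W^2\le J\sum_j f_j^2$, and $W\asymp\we$ pointwise with $\we$ radial, integrating on $\spd$ gives
\[
\we(r)^2 \asymp \sum_k C_k\,r^{2k},\qquad C_k:=\sum_j c_{j,k}\ge 0.
\]
A power series with nonnegative coefficients is log-convex in $\log r$ (this is the log-sum-exp convexity of $\log\sum_k a_k e^{2kt}$ in $t$), and the square root preserves log-convexity; hence $\we$ is equivalent to a log-convex weight function.

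For part (ii), necessity follows from (i). For sufficiency, assume $d=2m$ and identify $\rd\cong\cn$, $\bd\cong\bbn$. Here I would invoke the known fact that for every log-convex weight $\we$ there exists a holomorphic function $F$ on $\bbn$ with $|F(z)|\asymp\we(|z|)$; such an $F$ can be built as a lacunary power series whose coefficients are calibrated to $\we$ via a Legendre-type transform, and log-convexity of $\we$ is what makes the calibration succeed. Writing $F=u+i\ve$ with $u,\ve$ real-valued harmonic on $\bd$,
\[
|u|+|\ve|\asymp\sqrt{u^2+\ve^2}=|F|\asymp\we,
\]
so $J=2$ suffices.

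For part (iii), assume $d$ is odd and $\we$ is doubling; since every doubling weight is equivalent to a log-convex one, we may also take $\we$ log-convex. The single-function trick of part (ii) is unavailable in odd dimensions, so my plan is to construct a lacunary harmonic series
\[
F(z)=\sum_k a_k P_k(z),
\]
with $P_k$ a harmonic polynomial of degree $n_k$ (say $n_{k+1}\ge 2n_k$), normalized $\|P_k\|_{L^\infty(\spd)}=1$ and concentrated on a fixed spherical cap $\Omega\subset\spd$ (for example a high-degree zonal polynomial, or a reproducing-kernel polynomial of $\Omega$). The coefficients $a_k$ are calibrated so that $|a_k P_k(z)|\asymp\we(|z|)$ on the dyadic shell $|z|\sim 1-2^{-k}$. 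Choosing finitely many rotations $R_1,\dots,R_J\in O(d)$ with $\bigcup_j R_j(\Omega)=\spd$ and setting $f_j=F\circ R_j$, the pointwise equivalence $\sum_j|f_j|\asymp\we$ then follows.

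The main obstacle is the balancing required in (iii): one must simultaneously control (a) the upper bound $|F(z)|\lesssim\we(|z|)$, for which the doubling property makes the tails of the series summable; (b) the pointwise lower bound after the rotation cover, which needs the leading $P_k$ to satisfy $|P_k|\ge c>0$ uniformly on its cap $\Omega$, independent of $n_k$; and (c) suppression of off-scale cancellations between neighbouring terms, controlled by the lacunarity of $n_k$ together with doubling. The loss of complex structure in odd dimensions is precisely what forces both the rotation-covering argument and the doubling restriction; whether part (iii) extends to arbitrary log-convex $\we$ in odd $d$ is the subtle point the theorem leaves open.
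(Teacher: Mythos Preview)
Your argument for (i) is correct and matches the paper's approach. For (ii), however, there is no single holomorphic $F$ on $\bbn$ with $|F|\asymp\we$: if $|F|\ge c\we>0$ on all of $\bbn$ then $1/F$ is bounded holomorphic, and the maximum principle applied to $1/F$ on spheres of radius $r\to 1$ forces $|F(0)|\ge c\we(r)\to\infty$, a contradiction. The paper notes this explicitly ($J\ge 2$ by the maximum principle). What is actually available (Theorem~\ref{t_ad_logconv}, from \cite{AD13}) is a finite family $f_1,\dots,f_J\in\hol(\bbn)$ with $\sum_j|f_j|\asymp\we$; taking real and imaginary parts of each $f_j$ then gives the harmonic family, so the fix is immediate.

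The serious gap is in (iii). Your plan requires harmonic polynomials $P_k$ with $\|P_k\|_{L^\infty(\spd)}=1$ and $|P_k|\ge c>0$ on a \emph{fixed} cap $\Omega$, uniformly in the degree $n_k$. In odd dimensions no such polynomials are known; the paper flags this explicitly as an open problem. Zonal harmonics certainly fail: when normalised in $L^\infty$, the set $\{|Z_k|\ge c\}$ shrinks to the pole as $k\to\infty$, so no fixed finite collection of rotations can cover $\spd$ for all $k$. The paper's remedy is to abandon polynomials and use non-polynomial building blocks $u_{q,n}\in\har(\bd)$ from Eikrem--Malinnikova \cite{EM12}: for each $n$ there are $Q=Q(d)$ functions satisfying $|u_{q,n}|\le 1$ on $\bd$, $\max_{1\le q\le Q}|u_{q,n}(x)|\ge\tfrac14$ on the whole shell $0<1-|x|<2^{-\alpha-n}$, and the decay $|u_{q,n}(x)|\le C(p,d)\,2^{-np}(1-|x|)^{-p}$. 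With these blocks in hand, the lacunary series $F_{q,j}(x)=\sum_k A^{Jk+j}u_{q,n_{Jk+j}}(x)$ and the doubling-driven tail estimates run essentially along the lines you sketch (the index $q$ plays the role of your rotations, and the residue classes $j$ give enough lacunarity), but the existence of the $u_{q,n}$ is the non-trivial input your proposal is missing.
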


\subsection{Comments}\label{ss_comm}
\subsubsection{Holomorphically approximable weights}
Let $\hol(\bbn)$ denote
the space of holomorphic functions on the unit ball $\bbn$
of $\cn$, $m\ge 1$.

\begin{df}
A weight function $\we$ is called \textit{holomorphically approximable}
if there exists a finite family $\{f_1, f_2, \dots, f_J\}\subset\hol(\bbn)$
such that property~\eqref{e_apprx} holds in $\bbn$.
\end{df}

A characterization of holomorphically approximable
weight functions is given by the following result.

\begin{theorem}[{\cite[Theorems~1.2 and 1.3]{AD13}}]\label{t_ad_logconv}
Let $m\ge 1$. A weight function $\we$
is holomorphically approximable if and only if $\we$
is equivalent to a log-convex weight function.
\end{theorem}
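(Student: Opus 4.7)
Assume $f_1,\dots,f_J\in\hol(\bbn)$ realize $\sum_j |f_j|\asymp\we$, and set $M_j(r):=\sup_{|z|=r}|f_j(z)|$. I plan to show $\we\asymp\Sigma$, where $\Sigma(r):=\sum_j M_j(r)$, and that $\Sigma$ is log-convex. Each $M_j$ is log-convex in $\log r$ by a multivariable Hadamard principle: restricting $f_j$ to an arbitrary complex line through the origin reduces matters to the one-variable three-circles theorem, and the pointwise supremum of log-convex functions is log-convex. The class of log-convex functions (of $\log r$) is closed under addition---a one-line H\"older argument shows $\log(u+v)$ is convex when $\log u,\log v$ are---so $\Sigma$ is log-convex. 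For the equivalence with $\we$, the maximum principle yields $M_j(r)\le\sup_{|z|=r}\sum_k|f_k(z)|\lesssim\we(r)$, hence $\Sigma\lesssim\we$; conversely, any $z$ with $|z|=r$ gives $\we(r)\lesssim\sum_j|f_j(z)|\le\Sigma(r)$. A routine check of continuity, monotonicity, and unboundedness confirms $\Sigma$ is a bona fide weight function equivalent to $\we$.

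\textbf{Sufficiency.} Replacing $\we$ by an equivalent weight, assume $\we$ itself is log-convex. The first step is a lacunary-monomial expansion: convexity of $t\mapsto\log\we(e^t)$ supplies, via a discrete Legendre-transform argument, positive integers $n_1<n_2<\cdots$ with a gap condition such as $n_{k+1}\ge2n_k$ and positive coefficients $c_k$ with
\[
\we(r)\asymp\sum_{k\ge1}c_k r^{n_k}, \qquad 0\le r<1,
\]
the lacunarity converting the supremum of affine minorants of $\log\we\circ\exp$ into an equivalent sum. Second, for each unit vector $\zeta\in\partial\bbn$ I introduce the holomorphic \emph{probe} $g_\zeta(z):=\sum_k c_k\langle z,\zeta\rangle^{n_k}$, where $\langle\cdot,\cdot\rangle$ is the standard Hermitian inner product; since $|\langle z,\zeta\rangle|\le|z|$, one has $|g_\zeta(z)|\lesssim\we(|z|)$ globally, with equality (up to constants) achieved at $z=r\zeta$.

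\textbf{Covering construction, and the main obstacle.} The family I propose is $f_{\ell,i}(z):=\sum_k c_k\omega_i^k\langle z,\zeta_\ell\rangle^{n_k}$, indexed by an $\epsilon$-net $\{\zeta_\ell\}\subset\partial\bbn$ and a finite set $\{\omega_i\}$ of unimodular phases. The upper bound $|f_{\ell,i}|\lesssim\we(|\cdot|)$ is automatic from the probe estimate. The real work is the pointwise lower bound $\sum_{\ell,i}|f_{\ell,i}(z)|\gtrsim\we(|z|)$: for a given $z$ one selects $\zeta_\ell$ close to $z/|z|$ so that $|\langle z,\zeta_\ell\rangle|\asymp|z|$, but $\arg\langle z,\zeta_\ell\rangle$ is generically nonzero, producing phases $\omega_i^k e^{in_k\arg\langle z,\zeta_\ell\rangle}$ that can cause destructive interference across the lacunary series. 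This phase interference is the main obstacle, and I expect to resolve it by exploiting the lacunary gap $n_{k+1}\ge2n_k$---which limits how rapidly the relevant phases can drift between consecutive terms---together with a pigeonhole choice of $(\zeta_\ell,\omega_i)$ forcing at least one member of the finite family to have all of its dominant terms essentially aligned, so that $|f_{\ell,i}(z)|$ recovers the dominant monomial at radius $|z|$ and hence $\we(|z|)$. Simultaneously balancing the net density on $\partial\bbn$, the number of phases $\omega_i$, and the lacunarity growth---so that the family stays finite and the constants remain uniform in $z$---is the technical heart of this direction.
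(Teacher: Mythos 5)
First, a point of reference: the paper does not prove Theorem~\ref{t_ad_logconv} at all --- it is imported from \cite{AD13} --- and the only half it reproves is necessity, which it gets by squaring, integrating over spheres, and invoking the log-convexity of $\sum a_k^2r^{2k}$ (Lemma~\ref{l_KM}). Your necessity argument via the sup-norms $M_j(r)$ and the several-variable Hadamard three-circles theorem is a correct alternative: the supremum over complex lines and the finite sum of log-convex functions are log-convex, and $\Sigma\asymp\we$ is immediate. That half is fine.

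The sufficiency direction has a genuine gap, and not only the one you flag yourself. (a) You leave the pointwise lower bound unproved (``I expect to resolve it\dots''), and that is exactly where the theorem lives: a pigeonhole over finitely many phases $\omega_i$ is not known to defeat the interference of a lacunary series at \emph{every} point, and in \cite{AD13} this step requires a carefully designed choice of blocks and coefficients, not just the gap condition $n_{k+1}\ge 2n_k$. (b) More seriously, for $m\ge 2$ the slice-monomial probes cannot work at all. If $\zeta_\ell$ belongs to a fixed finite net and $|\langle z/|z|,\zeta_\ell\rangle|=1-\delta$ with $\delta>0$, then
\[
\frac{|\langle z,\zeta_\ell\rangle|^{n_k}}{|z|^{n_k}}=(1-\delta)^{n_k}\longrightarrow 0,\qquad k\to\infty,
\]
so the monomial that is supposed to dominate at radius $|z|$ loses its modulus (before any phase issue arises) unless $\delta\lesssim 1/n_k$; no finite net of directions can guarantee this uniformly in $k$ and $z$. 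This is precisely why \cite{AD12,AD13} build the functions from lacunary series of Aleksandrov--Ryll--Wojtaszczyk polynomials --- finite families of homogeneous polynomials of each degree with sup-norm $1$ whose maximum of moduli is bounded below on all of $\partial\bbn$, a nontrivial existence result with no analogue among powers of $\langle\cdot,\zeta_\ell\rangle$. (Compare Section~\ref{s_doub} of the paper, where the same role is played by the Eikrem--Malinnikova functions of Lemma~\ref{l_em} exactly because such polynomials are unavailable in odd real dimension.) Your construction would have to be replaced wholesale by such building blocks for $m\ge2$, and completed by a genuine anti-cancellation argument even for $m=1$.
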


In particular, Theorem~\ref{t_main}(ii) follows from
Theorem~\ref{t_main}(i) and Theorem~\ref{t_ad_logconv}.
Indeed, given a log-convex weight function extended to a radial weight on $B_{2m}$,
it suffices to identify $B_{2m}$ and $\bbn$,
and to take the real and imaginary parts of holomorphic
functions provided by Theorem~\ref{t_ad_logconv}.

To the best of the author's knowledge,
Theorem~\ref{t_main}(iii) is new for any weight function $\we$.

\subsubsection{Optimal $J$ in \eqref{e_apprx}}
We always have $J\ge 2$ in \eqref{e_apprx}
by the mean value property for harmonic functions.
The corresponding optimal number $J=J(d)$
is probably of independent interest.

\subsubsection{$L^2$-means}
For $f\in\har(\bd)$, put
\[
M_2^2(f, r) = \int_{\spd} |f(r y)|^2\, d\sid(y),
\quad 0\le r<1,
\]
where $\sid$ is the normalized Lebesgue measure on the
sphere $\spd$.
The proof of Theorem~\ref{t_main}(i)
uses the following modification of
property~\eqref{e_apprx} in terms of $L^2$-means:
\begin{equation}\label{e_har_M2}
\begin{aligned}
\textrm{there exists\ } J\in\Nbb \textrm{\ and\ }
f_1,\dots, f_J\in\har(\bd)
\textrm{\ such that\ }
\\
M_2^2(f_1, r) + \dots + M_2^2(f_J, r) \asymp \we^2(r),\quad 0\le r<1.
\end{aligned}
\end{equation}
If the above property holds, then $\we$ is called harmonically $L^2$-approximable.
We use an analogous definition for $\hol(\bbn)$.

As mentioned above, we have $J\ge 2$ in \eqref{e_apprx}
for any weight function $\we$.
However, if \eqref{e_apprx} is replaced by \eqref{e_har_M2},
then we need just one function $f\in\har(\bd)$;
see Proposition~\ref{p_L2} below.
The situation is similar for $\hol(\bbn)$:
one has $J\ge 2$ in the holomorphic analog of \eqref{e_apprx} by the maximum principle, but
the approximation problem for $L^2$-means is solvable
by one function $f\in\hol(\bbn)$.

\begin{prop}\label{p_L2}
Let $\we$ be a weight function on $[0,1)$.
Then the following properties are equivalent:
\begin{align}
&\text{$\we$ is equivalent to a log-convex function;}\label{e_L2_logconv}
\\
&\text{$\we$ is harmonically $L^2$-approximable;}\label{e_L2_har}
\\
&\text{there exists $f\in\har(\bd)$
such that $M_2(f, r) \asymp \we(r)$, \quad $0\le r <1$;}\label{e_L2_har_1}
\\
&\text{$\we$ is holomorphically $L^2$-approximable;}\label{e_L2_hol}
\\
&\text{there exists $f\in\hol(\bbn)$
such that $M_2(f, r) \asymp \we(r)$, \quad $0\le r <1$.}\label{e_L2_hol_1}
\end{align}
\end{prop}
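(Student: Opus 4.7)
The plan is to close two cycles around \eqref{e_L2_logconv}, one harmonic and one holomorphic. Since $\eqref{e_L2_har_1}\Rightarrow\eqref{e_L2_har}$ and $\eqref{e_L2_hol_1}\Rightarrow\eqref{e_L2_hol}$ are trivial (take $J=1$), the task reduces to proving the two direct implications \eqref{e_L2_har}$\Rightarrow$\eqref{e_L2_logconv} and \eqref{e_L2_hol}$\Rightarrow$\eqref{e_L2_logconv}, together with the two reverse implications \eqref{e_L2_logconv}$\Rightarrow$\eqref{e_L2_har_1} and \eqref{e_L2_logconv}$\Rightarrow$\eqref{e_L2_hol_1}. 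Both directions rely on the observation that $L^2$-means of harmonic or holomorphic functions on $\bd$ or $\bbn$ are power series in $r^2$ with nonnegative coefficients, together with a classical construction realising any log-convex weight as such a series.

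For the direct direction, I would expand each $f_j$ on $\bd$ (resp.\ on $\bbn$) in solid spherical harmonics $Y_{j,k}$ of degree $k$ (resp.\ in homogeneous holomorphic polynomials $P_{j,k}$). Orthogonality on every sphere $\{|x|=r\}$ gives
\[
\sum_j M_2^2(f_j,r)=\sum_{k=0}^\infty c_k r^{2k},\qquad c_k\ge 0.
\]
For any series $g(r)=\sum_k c_k r^{2k}$ with $c_k\ge 0$, Cauchy--Schwarz yields $g(\sqrt{rs})^2\le g(r)g(s)$, which is midpoint convexity of $\log g$ as a function of $\log r$; continuity upgrades this to full convexity. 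Since $\we^2\asymp g$, it follows that $\we$ is equivalent to the log-convex function $\sqrt{g}$, closing both \eqref{e_L2_har}$\Rightarrow$\eqref{e_L2_logconv} and \eqref{e_L2_hol}$\Rightarrow$\eqref{e_L2_logconv}.

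For the reverse direction, given a log-convex $\we$ I would use the standard Legendre-transform construction: for each $k\ge 0$ let $r_k\in(0,1)$ minimize $r\mapsto\we^2(r)/r^{2k}$ and set $a_k=\we^2(r_k)/r_k^{2k}$; log-convexity of $\we^2$ in $\log r$ then yields $\sum_{k\ge 0} a_k r^{2k}\asymp\we^2(r)$ on $[0,1)$. Fixing an $L^2(\spd)$-unit-norm solid spherical harmonic $Y_k$ of degree $k$ on $\bd$ and an $L^2(\spn)$-unit-norm homogeneous holomorphic polynomial $p_k$ of degree $k$ on $\bbn$ for each $k$, I would set
\[
F(x)=\sum_{k\ge 0}\sqrt{a_k}\,Y_k(x),\qquad f(z)=\sum_{k\ge 0}\sqrt{a_k}\,p_k(z).
\]
Both series converge uniformly on compacta of $\bd$ and $\bbn$: the bound $a_k\rho^{2k}\le C\we^2(\rho)<\infty$ for every $\rho<1$, coupled with the polynomial growth of $\|Y_k\|_{L^\infty(\spd)}$ and $\|p_k\|_{L^\infty(\spn)}$ in $k$, suffices. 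Hence $F\in\har(\bd)$ and $f\in\hol(\bbn)$, and orthogonality gives $M_2^2(F,r)=M_2^2(f,r)=\sum_k a_k r^{2k}\asymp\we^2(r)$, establishing \eqref{e_L2_har_1} and \eqref{e_L2_hol_1}. The main obstacle is justifying the comparison $\sum_k a_k r^{2k}\asymp\we^2(r)$; this is a classical Legendre-transform computation applied to the convex function $\log\we^2$ of $\log r$ and is essentially the technical core behind Theorem~\ref{t_ad_logconv}. Once that estimate is in hand, everything else is routine bookkeeping with orthogonal expansions on spheres.
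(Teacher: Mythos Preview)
Your scheme for the direct implications \eqref{e_L2_har}$\Rightarrow$\eqref{e_L2_logconv} and \eqref{e_L2_hol}$\Rightarrow$\eqref{e_L2_logconv} is exactly the paper's Lemma~\ref{l_KM}: expand in homogeneous harmonics (resp.\ holomorphic polynomials), use orthogonality on spheres to write $\sum_j M_2^2(f_j,r)=\sum_k c_k r^{2k}$ with $c_k\ge 0$, and observe that such a series is log-convex. Your Cauchy--Schwarz argument is precisely the ``direct computation'' the paper alludes to alongside Hadamard's three circles theorem.

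The reverse direction, however, has a genuine gap. Your recipe $a_k=\min_{r}\we^2(r)/r^{2k}$ does \emph{not} in general give $\sum_k a_k r^{2k}\asymp\we^2(r)$; Legendre duality only yields $\sup_k a_k r^{2k}\asymp\we^2(r)$, and the full sum can be much larger. Take the log-convex weight $\we^2(r)=(1-r)^{-1}$. The minimum of $(1-r)^{-1}r^{-2k}$ occurs at $r_k=2k/(2k+1)$, giving
\[
a_k=(2k+1)\Bigl(1+\tfrac{1}{2k}\Bigr)^{2k}\sim 2ek\quad(k\to\infty),
\]
so $\sum_k a_k r^{2k}\asymp\sum_k k\,r^{2k}\asymp(1-r)^{-2}$, not $(1-r)^{-1}$. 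Thus the claimed equivalence fails, and with it the construction of $F$ and $f$.

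The paper avoids this issue entirely: instead of building the coefficients $a_k$ from a Legendre transform, it imports them from the one-variable holomorphic result. By \cite[Theorem~1.2]{AD13} there exist $f_1,f_2\in\hol(\disk)$ with $|f_1|+|f_2|\asymp\we$; integrating $|f_1(r\zeta)|^2+|f_2(r\zeta)|^2$ over the circle gives a ready-made sequence $a_k^2=|\widehat{f_1}(k)|^2+|\widehat{f_2}(k)|^2$ satisfying $\sum_k a_k^2 r^{2k}\asymp\we^2(r)$ (Lemma~\ref{l_L2approx}). After that, your idea of summing $a_k Y_k$ with $\|Y_k\|_{L^2(\spd)}=1$ (the paper uses normalised zonal harmonics) is exactly what the paper does. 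So your overall architecture is right, but the step producing the coefficients needs either a correct direct construction (typically lacunary, not the full Legendre sequence) or the shortcut through Theorem~\ref{t_ad_logconv}.
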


\subsubsection{Growth spaces}
One may consider property~\eqref{e_apprx}
as a reverse estimate in the growth space $\grwhar(\bd)$, $d\ge 2$.
By definition, $\grwhar(\bd)$ consists of those $f\in\har(\bd)$
for which
\begin{equation}\label{e_grwhar_def}
|f(x)|\le C \we(x), \quad x\in\bd.
\end{equation}
Property~\eqref{e_apprx}
guarantees that estimate~\eqref{e_grwhar_def} is, in a sense, reversible
for a finite family of test functions in $\grwhar(\bd)$.
Such test functions are known to be useful in the studies of various concrete operators
on the growth spaces
(see, for example, \cite{AD12, KP11} and references therein).

\subsection{Organization of the paper}\label{ss_org}
The results related to the log-convexity are collected
in Section~\ref{s_logconv}: we prove Theorem~\ref{t_main}(i)
and Proposition~\ref{p_L2}.
Theorem~\ref{t_main}(iii), the main technical result of the present paper,
is obtained in Section~\ref{s_doub}.

\section{Log-convex weight functions}\label{s_logconv}

\subsection{Equivalence to a log-convex weight function is necessary}\label{ss_necessary}

The following lemma is standard (see, for example, \cite{KM94}).
\begin{lemma}\label{l_KM}
Let $\we$ be a weight function.
If $\we$ is harmonically 
$L^2$-approximable,
then $\we$ is equivalent to a log-convex weight function.
\end{lemma}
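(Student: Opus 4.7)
The plan is to exploit the spherical harmonic expansion of harmonic functions and the elementary fact that a Dirichlet series with nonnegative coefficients is log-convex after the change of variable $r\mapsto\log r$.

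First, I would expand each $f_j\in\har(\bd)$ provided by the $L^2$-approximation in solid spherical harmonics, writing $f_j(r\zeta)=\sum_{k\ge 0} r^k Y_{j,k}(\zeta)$ with $Y_{j,k}$ a spherical harmonic of degree $k$ on $\spd$. Orthogonality of spherical harmonics of different degrees in $L^2(\sid)$ then yields
\[
M_2^2(f_j, r) = \sum_{k\ge 0} c_{j,k}\, r^{2k},\qquad c_{j,k} = \|Y_{j,k}\|_{L^2(\sid)}^2\ge 0.
\]
Summing over $j=1,\dots,J$ I obtain
\[
F(r) := \sum_{j=1}^{J} M_2^2(f_j, r) = \sum_{k\ge 0} b_k\, r^{2k},\qquad b_k = \sum_{j=1}^J c_{j,k}\ge 0.
\]

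The second step is to observe that such a ``power series with nonnegative coefficients'' is automatically log-convex in $\log r$: writing $t=\log r$, the function $\log F(e^t) = \log \sum_k b_k e^{2kt}$ is convex in $t$ because the logarithm of a sum of exponentials with nonnegative coefficients is a convex function of the exponents (equivalently, by the Cauchy--Schwarz inequality applied to the measure $\sum_k b_k \delta_{2k}$). Taking square roots preserves log-convexity, so $\we_0(r):=\sqrt{F(r)}$ is log-convex as a function of $\log r$.

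It remains to verify that $\we_0$ is a weight function equivalent to $\we$. By the $L^2$-approximation hypothesis $\we_0^2 = F \asymp \we^2$, hence $\we_0\asymp\we$; in particular $\we_0$ is unbounded and bounded away from $0$ on any interval where $\we$ is. Monotonicity and continuity of $\we_0$ follow from the fact that $F$ is a convergent power series in $r^2$ with nonnegative coefficients. The only subtle point (and the only step that requires a moment of care) is ensuring positivity at $r=0$: since $\we(0)>0$ and $\we_0\asymp\we$, one gets $b_0=F(0)>0$, so $\we_0$ is strictly positive on $[0,1)$. This completes the identification of $\we$ with a log-convex weight function up to equivalence.
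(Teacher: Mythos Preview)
Your proposal is correct and follows essentially the same route as the paper: expand each $f_j$ in spherical harmonics, use orthogonality to write $\sum_j M_2^2(f_j,r)$ as a power series $\sum_k b_k r^{2k}$ with nonnegative coefficients, and conclude log-convexity of its square root. The paper cites Hadamard's three circles theorem (or ``direct computations'') for the last step, whereas you spell out the log-sum-exp/Cauchy--Schwarz argument and are a bit more careful about checking that $\sqrt{F}$ is actually a weight function; these are expository differences, not a different method.
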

\begin{proof}
Assume that
\[
\we^2(r) \asymp
M_2^2(f_1, r) +\dots + M_2^2(f_J, r), \quad 0\le r <1,
\]
for a family $\{f_1,\dots, f_J\}\subset \har(\bd)$.
Put $M_2^2(r)= M_2^2(f_1, r) +\dots + M_2^2(f_J, r)$.
We claim that $M_2(r)$ is log-convex.
Indeed, for $j=1,\dots, J$, we have
\[
f_j(x) = \sum_{k=0}^\infty P_{j,k}(x),\quad x\in\bd,
\]
where $P_{j,k}$ is a harmonic homogeneous polynomial of degree $k$,
and the series converges uniformly on compact subsets of $\bd$.
For $k_1\neq k_2$, $P_{k_1}$ and $P_{k_2}$
are orthogonal in $L^2(\spd)$, hence,
\[
M_2^2(f_j, r) = \sum_{k=0}^\infty \|P_{j,k}\|^2_{L^2(\spd)} r^{2k},\quad 0\le r<1.
\]
So, $M_2^2(r) = \sum_{k=0}^\infty a_k^2 r^{2k}$ for certain $a_k\in\Rbb$.
Therefore, Hadamard's three circles theorem or direct computations
guarantee that $M_2^2(r)$ and $M_2(r)$ are log-convex,
as required.
\end{proof}

Applying Lemma~\ref{l_KM}, we obtain Theorem~\ref{t_main}(i)
and several implications in Proposition~\ref{p_L2}.

\begin{proof}[Proof of Theorem~\ref{t_main}(i)]
We are given a family
$\{f_1,\dots, f_J\}\subset \har(\bd)$ such that
\[
\we(x) \asymp
|f_1 (x)| +\dots + |f_J(x)|, \quad x\in\bd,
\]
or, equivalently,
\[
\we^2(r) \asymp
|f_1 (r y)|^2 +\dots + |f_J(r y)|^2, \quad 0\le r <1,\ y\in\spd.
\]
Integrating over the sphere $\spd$ with respect to Lebesgue measure $\sid$,
we obtain
\[
\we^2(r) \asymp
M_2^2(f_1, r) +\dots + M_2^2(f_J, r), \quad 0\le r <1.
\]
So, by Lemma~\ref{l_KM}, $\we$
is equivalent to a log-convex weight function.
\end{proof}

Clearly, the above argument also guarantees that every holomorphically approximable
weight function is equivalent to a log-convex one; see \cite{AD13} for a different proof.

\begin{proof}[Proof of Proposition~\ref{p_L2}]
The implications \eqref{e_L2_har_1}$\Rightarrow$\eqref{e_L2_har}
and \eqref{e_L2_hol_1}$\Rightarrow$\eqref{e_L2_hol} are trivial.
By Lemma~\ref{l_KM}, \eqref{e_L2_har} implies \eqref{e_L2_logconv},
and \eqref{e_L2_hol} implies \eqref{e_L2_logconv}.
So, to finish the proof of Proposition~\ref{p_L2},
it suffices to show that
\eqref{e_L2_logconv} implies \eqref{e_L2_har_1} and \eqref{e_L2_hol_1}.
\end{proof}

\subsection{Approximation by integral means}\label{ss_L2}
In this section, we show that
\eqref{e_L2_logconv} implies \eqref{e_L2_har_1}.
The proof of the implication  \eqref{e_L2_logconv}$\Rightarrow$\eqref{e_L2_hol_1}
is analogous.

\begin{lemma}\label{l_L2approx}
Let $\we$ be a log-convex weight function on $[0,1)$.
Then there exists a sequence $\{a_k\}_{k=0}^\infty \subset\Rbb$
such that
\begin{equation}\label{e_L2approx}
\sum_{k=0}^\infty a_k^2 r^{2k} \asymp \we^2(r), \quad 0\le r <1,
\end{equation}
where the series converges uniformly on compact subsets of $[0,1)$.
\end{lemma}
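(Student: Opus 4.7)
The plan is to construct the coefficients $a_k$ via Fenchel--Legendre duality applied along a lacunary subsequence of indices. Changing variables via $t = -\log r$ and $\Phi(t) = \log\we^2(e^{-t})$, the hypothesis that $\we$ is log-convex becomes: $\Phi$ is convex and non-increasing on $(0,+\infty)$ with $\Phi(t)\to+\infty$ as $t\to 0^+$. For each integer $k\ge 0$ the convex function $t\mapsto \Phi(t)+2kt$ attains an infimum at some $t_k\in(0,+\infty]$, and the sequence $(t_k)$ is non-increasing with $t_k\to 0^+$. I would set
\[
a_k^2 := \exp\bigl(\inf_{t>0}(\Phi(t)+2kt)\bigr)
\]
for $k$ in a subsequence $k_0<k_1<k_2<\dots$ chosen so that the tangent points $t_{k_n}$ are comparable to $2^{-n}$, and put $a_k = 0$ otherwise; such integers $k_n$ exist because $t_k\to 0$ monotonically.

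The termwise bound $a_{k_n}^2 r^{2k_n}\le\we^2(r)$ is an immediate consequence of convexity: writing $t_0=-\log r$, the tangent-line inequality at $t_{k_n}$ with slope $-2k_n$ gives
\[
\log\bigl(a_{k_n}^2 r^{2k_n}\bigr)=\Phi(t_{k_n})+2k_n(t_{k_n}-t_0)\le\Phi(t_0).
\]
For the summed bound $\sum_k a_k^2 r^{2k}\le C\we^2(r)$, the geometric spacing of the $t_{k_n}$'s ensures that only $O(1)$ indices $n$ have $t_{k_n}$ comparable to $t_0$; for the remaining $n$, convexity of $\Phi$ together with the definite slope gap between the tangent line at $t_{k_n}$ and the one at the critical index forces the ratios $a_{k_n}^2 r^{2k_n}/\we^2(r)$ to decay at least geometrically in $|n-m|$, where $m$ is the critical index associated to $r$. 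This simultaneously gives uniform convergence on compact subsets of $[0,1)$ and the upper half of \eqref{e_L2approx}.

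For the lower bound, given $r=e^{-t_0}$ I pick $n$ so that $t_{k_n}$ lies within a fixed bounded ratio of $t_0$; the tangent-line computation combined with a quantitative sag estimate then shows $a_{k_n}^2 r^{2k_n}\ge c\,\we^2(r)$ for an absolute constant $c>0$, whence $\sum_k a_k^2 r^{2k}\ge c\,\we^2(r)$.

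The main obstacle is the correct calibration of the subsequence $(k_n)$, which controls the sag $\Phi(t_0)-\Phi(t_{k_n})-2k_n(t_{k_n}-t_0)$ on both sides of the critical index. Using every integer $k$ is known to fail in general, because on a nearly affine portion of $\Phi$ the contributing terms would form a broad Gaussian-type cluster and the sum would exceed $\we^2(r)$ by an unbounded factor; conversely, too sparse a subsequence would lose the lower bound, because some values of $r$ would have no term close to the critical index. The calibration $t_{k_n}\asymp 2^{-n}$ is the natural compromise, and verifying the required sag estimates along such a subsequence is a standard exercise in convex analysis that uses only the two endpoint conditions $\Phi(0^+)=+\infty$ and $\Phi(+\infty)=\log\we^2(0)$.
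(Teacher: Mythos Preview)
Your approach is genuinely different from the paper's, which simply invokes \cite[Theorem~1.2]{AD13} to obtain $f_1,f_2\in H(\mathbb{B}_1)$ with $|f_1|+|f_2|\asymp w$ on the disk and then integrates $|f_1|^2+|f_2|^2$ over circles, reading off $a_k^2=|\widehat{f_1}(k)|^2+|\widehat{f_2}(k)|^2$. Your direct Legendre-transform construction is in principle a reasonable alternative and would make the lemma self-contained, but as written it has a real gap in the lower bound.

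The problem is the calibration $t_{k_n}\asymp 2^{-n}$. A geometric spacing in the \emph{domain} of $\Phi$ does not, for a general convex $\Phi$ with $\Phi(0^+)=+\infty$, control the sag
\[
\Phi(t_0)-\Phi(t_{k_n})-2k_n(t_{k_n}-t_0)
\]
by an absolute constant. Take $\Phi(t)=1/t$, which corresponds to the legitimate log-convex weight $w(r)=\exp\bigl(-\tfrac{1}{2\log r}\bigr)$ (with $w(0)=1$). The tangent point for slope $-2k$ is $t_k=(2k)^{-1/2}$, and a short computation gives, for any tangent point $t_*$ and any $t_0$,
\[
\Phi(t_0)-\Phi(t_*)-\Phi'(t_*)(t_0-t_*)=\frac{(t_0/t_*-1)^2}{t_0}.
\]
With your spacing the best available $t_{k_n}$ lies within a fixed multiplicative factor (bounded away from $1$) of a generic $t_0$, so the sag is of order $1/t_0\to\infty$ as $r\to 1^-$. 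Hence no single term $a_{k_n}^2 r^{2k_n}$ stays within a fixed factor of $w^2(r)$, and the lower half of \eqref{e_L2approx} fails. The sentence ``uses only the two endpoint conditions $\Phi(0^+)=+\infty$ and $\Phi(+\infty)=\log w^2(0)$'' is precisely where the argument breaks: those endpoint conditions give no quantitative curvature control at interior points.

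The fix is to calibrate in the \emph{range} of $\Phi$ rather than its domain: choose the $t_{k_n}$ so that $\Phi(t_{k_n})$ is (essentially) an arithmetic progression, equivalently $w$ at those radii is geometric. For $\Phi(t)=1/t$ this gives $t_n=1/n$, $k_n\asymp n^2$, and one checks the sag is then uniformly $\le 1$. With that calibration both the geometric decay you need for the upper bound and the bounded-sag estimate for the lower bound go through; this is exactly the mechanism underlying \cite[Theorem~1.2]{AD13}.
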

\begin{proof}
By \cite[Theorem~1.2]{AD13}, there exist $f_1, f_2\in \hol(\disk)$
such that $|f_1(z)| + |f_2(z)| \asymp \we(|z|)$, $z\in \disk$, hence,
\[
|f_1(r\za)|^2 + |f_2(r\za)|^2 \asymp \we^2(r), \quad 0\le r <1,\ \za\in\partial\disk.
\]
Integrating the above equivalence with respect to Lebesgue measure on the unit circle $\partial\disk$,
we obtain \eqref{e_L2approx} with $a_k^2 = |\widehat{f_1}(k)|^2 + |\widehat{f_2}(k)|^2$.
\end{proof}

\begin{proof}[Proof of Proposition~\ref{p_L2}: \eqref{e_L2_logconv}$\Rightarrow$\eqref{e_L2_har_1}]
Let $\harm_k = \harm_k(d)$
denote the space of harmonic homogeneous polynomials
of degree $k$ in $d$ real variables.
The same symbol is used for the restriction of $\harm_k$ to the sphere $\spd$.
Let $Z_k(\cdot, \cdot)$ denote the reproducing kernel for $\harm_k \subset L^2(\spd)$.
Fix a point $x\in\spd$. So, we have $Z_k(\cdot) = Z_k(x, \cdot) \in \harm_k$.
Set $Y_k = (\textrm{dim}\harm_k)^{-\frac{1}{2}} Z_k$. Then
\begin{align}
\|Y_k\|_{L^\infty(\spd)} &= \sqrt{\textrm{dim}\harm_k};\label{e_har_Li}
\\
\|Y_k\|_{L^2(\spd)} &= 1.\label{e_har_L2}
\end{align}
See, for example, \cite{DX13} for the above properties of zonal harmonics $Z_k$.

Given a log-convex weight function $\we$, let
the sequence $\{a_k\}_{k=0}^\infty$ be that provided by Lemma~\ref{l_L2approx}.
Put
\begin{equation}\label{e_zonal}
f(x) = \sum_{k=0}^\infty a_k Y_{k}(x),\quad x\in \bd.
\end{equation}
Property~\eqref{e_har_Li} and explicit formulas for $\textrm{dim}\harm_k$
guarantee that the above series converges uniformly on compact subsets of $\bd$.
Hence, $f\in\har(\bd)$.
Since \eqref{e_zonal} is an orthonormal series, we have
\[
M_2^2(f,r) = \sum_{k=0}^\infty a_k^2 r^{2k} \asymp \we^2(r),\quad 0\le r<1,
\]
by Lemma~\ref{l_L2approx}.
So, the proof of the proposition is finished.
\end{proof}

\section{Doubling weight functions}\label{s_doub}

In the present section, we prove Theorem~\ref{t_main}(iii).
Both in \cite{AD12} and \cite{AD13}, the required holomorphic functions
in the complex ball $\bbn$, $m\ge 2$, are constructed
as appropriate lacunary series of Aleksandrov--Ryll--Wojtaszczyk polynomials.
As far as the author is concerned, the existence
of analogous homogeneous harmonic polynomials in $\mathbb{R}^{2n+1}$, $n=1,2,\dots$,
remains an open problem.
So, in the proof of Theorem~\ref{t_main}(iii), we use series of special harmonic functions
that are not polynomials.

\subsection{Building blocks}\label{s_build}
\begin{lemma}[see {\cite[Lemma~3]{EM12}}]\label{l_em}
Let $d\ge 2$ and let $p\in\Nbb$.
Then there exist constants $\alpha=\alpha(d)\in\Nbb$, $Q=Q(d)\in\Nbb$ and $C=C(p,d)>0$
with the following property:
for every $n\in\Zbb_+$, there exist functions
$u_{q, n}\in \har(\bd)$, $q=1, \dots, Q$,
such that
\begin{align}
|u_{q, n}(x)| &\le 1, \quad x\in B_d; \label{e_em1}
\\
\max_{1\le q\le Q}|u_{q, n}(x)| &\ge \frac{1}{4},\quad
0< 1-|x| < 2^{-\alpha-n}; \label{e_em2}
\\
|u_{q, n}(x)| &\le C(p, d) 2^{-np} (1-|x|)^{-p}, \quad x\in B_d. \label{e_em3}
\end{align}
\end{lemma}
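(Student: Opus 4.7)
The plan is to build $u_{q,n}$ as ``harmonic wavelets'' on $\bd$: functions localized near spherical caps of radius $2^{-n}$ on $\spd$, with spherical-harmonic content concentrated at degrees $\gtrsim 2^n$. First, I would fix a maximal $c 2^{-n}$-separated set $\{\xi_{j,n}\}\subset\spd$ and color its index set with $Q=Q(d)$ colors so that same-color points are pairwise separated by a fixed multiple of $2^{-n}$; this is possible by greedy coloring of a graph of uniformly bounded degree. For each color $q$, I would sum smooth bumps $\rho(c^{-1} 2^n d(\cdot,\xi_{j,n}))$ over $j\in\mathcal I_{q,n}$ to obtain a $C^\infty$ function $\varphi_{q,n}:\spd\to[0,1]$ whose supports are disjoint and whose sum over $q$ is bounded below on $\spd$.

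Next, letting $\Pi_k$ denote the orthogonal projection from $L^2(\spd)$ onto $\harm_k$ and picking a threshold $K=K(p,d)\asymp 2^n$, I would set
\[
u_{q,n}(x)=\sum_{k\ge K} r^k (\Pi_k\varphi_{q,n})(x/|x|),\qquad x=r\zeta\in\bd,
\]
which is harmonic as a uniformly convergent series of harmonic polynomials. Equivalently, $u_{q,n}=P[\varphi_{q,n}]-v_{q,n}$, where $v_{q,n}$ is the harmonic extension of a smoothly tapered low-frequency truncation of $\varphi_{q,n}$; using a de la Vall\'ee Poussin mean ensures both terms are uniformly $L^\infty$-bounded, which (after a harmless rescaling) yields \eqref{e_em1}. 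For \eqref{e_em3}, I would invoke smoothness: since $\varphi_{q,n}$ has derivatives of order $m$ bounded by $C_m 2^{mn}$, the projections $\Pi_k\varphi_{q,n}$ decay rapidly in $k/2^n$, and summing $r^k$-weighted yields the required $2^{-np}(1-|x|)^{-p}$ bound once $m$ and $K/2^n$ are chosen large enough in terms of $p,d$. For \eqref{e_em2}, I would observe that for $x$ with $1-|x|<2^{-\alpha-n}$, the Poisson integral $P[\varphi_{q,n}](x)$ is within $1/2$ of $\varphi_{q,n}(x/|x|)$, the latter equal to $1$ for the unique color $q$ whose net point is closest to $x/|x|$, while $v_{q,n}(x)$ is correspondingly small---provided $\alpha=\alpha(d)$ is chosen large enough.

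The main obstacle is the conflict between \eqref{e_em2} and \eqref{e_em3}: harmonic functions of ``pure high frequency'' are oscillatory and hard to bound from below near $\spd$, while smoothly localized tent functions have a low-frequency tail that destroys the rapid interior decay. The reconciliation is to subtract a \emph{smoothly} tapered low-frequency piece rather than a sharp cutoff, preserving uniform $L^\infty$-boundedness while keeping boundary values close to $\varphi_{q,n}$. Fine-tuning $\alpha(d)$ is precisely what ensures the low-frequency correction $v_{q,n}$ is genuinely smaller than the bump in the critical annulus $\{1-|x|<2^{-\alpha-n}\}$.
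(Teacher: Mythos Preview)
The paper's route is much shorter and different from yours: it quotes \cite[Lemma~3]{EM12} as a black box for a single $v_1\in\har(\bd)$ that already satisfies \eqref{e_em1}, \eqref{e_em3}, and $|v_1(r\zeta)|\ge \tfrac14$ for $0<1-r<2^{-\alpha-n}$ and $\zeta$ in an explicit angular region $E_n\subset\spd$ (described via the first two coordinates). The only additional work is rotation: four rotates of $E_n$ cover the fixed zone $E_0=\{y\in\spd:y_1^2+y_2^2\ge 9/16\}$, and finitely many rotates of $E_0$ cover $\spd$; composing $v_1$ with these rotations produces the family $\{u_{q,n}\}_{q=1}^Q$.

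Your from-scratch construction has a genuine gap, located exactly at the tension you flag in the last paragraph---and your proposed fix does not close it. Your argument for \eqref{e_em3} explicitly requires $K/2^n$ large: smoothness of $\varphi_{q,n}$ only forces $\Pi_k\varphi_{q,n}$ to decay once $k\gg 2^n$. But by Jackson's theorem on the sphere, large $K/2^n$ gives $\|\varphi_{q,n}-V_K\varphi_{q,n}\|_\infty\le C_{m,d}(2^n/K)^m$, hence $u_{q,n}=P[\varphi_{q,n}-V_K\varphi_{q,n}]$ is \emph{uniformly} small on $\bd$ by the maximum principle, and \eqref{e_em2} fails everywhere, not just at isolated bad points. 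Enlarging $\alpha$ cannot repair this: the boundary values of $v_{q,n}$ are $V_K\varphi_{q,n}$, which do not depend on $\alpha$, and for $K\gg 2^n$ these are close to $\varphi_{q,n}$, not ``correspondingly small.'' Put differently, a function smooth at scale $2^{-n}$ carries essentially no spectral mass above a fixed multiple of $2^n$, so projecting onto $\{k\ge K\}$ with $K/2^n$ large leaves almost nothing to bound from below. (The scheme might be rescued by keeping $K/2^n$ bounded and deriving \eqref{e_em3} instead from the degree-$K$ $L^1$-approximation error of the Poisson kernel $P_r(\zeta,\cdot)$---using that $u_{q,n}$ is the Poisson integral of a function orthogonal to all $\harm_k$ with $k\le K$---but then securing \eqref{e_em2} needs $V_K\varphi_{q,n}$ to be far from $\varphi_{q,n}$ at \emph{every} boundary point, which forces genuine equidistribution of your net at scale $1/K$; a generic maximal separated set does not have this.)
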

\begin{proof}
Let $n\in\Zbb_+$. By \cite[Lemma~3]{EM12},
there exists a function $v_1\in\har(\bd)$ with properties \eqref{e_em1}, \eqref{e_em3},
and such that
\[
|v_1(ry)| \ge \frac{1}{4},\quad 0< 1-r < 2^{-\alpha-n}\text{\ and\ } y\in E_n,
\]
where
\begin{align*}
E_n &=\left\{y\in\spd:\ y=(t\cos\varphi, t\sin\varphi, y_3, \dots, y_d),\ t\ge\frac{3}{4},\ \varphi\in\Phi_n \right\},
\\
\Phi_n &=\bigcup_{j=0}^{2^n-1}
\left[\left((2j+\frac{1}{4}\right)2^{-n}\pi,\, \left(2j+\frac{3}{4}\right)2^{-n}\pi \right].
\end{align*}
Rotating the set $E_n$, we obtain $v_1, v_2, v_3, v_4\in \har(\bd)$ such that
\[
|v_1(ry)| +\dots + |v_4(ry)| \ge \frac{1}{4},\quad 0< 1-r < 2^{-\alpha-n}\text{\ and\ } y\in E_0,
\]
where $E_0 =\{y\in\spd:\ y_1^2 +y_2^2 \ge \frac{9}{16}\}$.
Now, observe that there exists a finite family of rotations $\{T_k\}_{k=1}^K$
such that
\[
\bigcup_{k=1}^K T_k E_0 =\spd.
\]
So, the family
$v_m T_k^{-1}\in \har(\bd)$, $m=1,2,3,4$, $k=1, \dots, K$, has the required properties
for the number $n\in\Zbb_+$ under consideration.
\end{proof}

\subsection{Construction}
We are given a doubling weight function $\we: [0,1)\to (0,+\infty)$.
Without loss of generality, assume that $\we(0)=1$.
We use the auxiliary function
\[
\PH(x) = \we\left( 1-\frac{1}{x}\right), \quad x\ge 1.
\]
So, we have $\PH(1)=1$ and $\we(t) = \PH\left( \frac{1}{1-t}\right)$, $0\le t<1$.
The doubling condition (\ref{e_reg_df_we}) rewrites as
\begin{equation}\label{e_reg_df}
\PH(2x) \le A \PH(x),
\quad x\ge 1.
\end{equation}
Without loss of generality, we assume that $A\ge 2$.

For $k=0, 1, \dots$, set
\begin{equation}\label{e_nk_df}
n_k = \max\{j\in\Zbb_+:\ \PH(2^j)\le A^{k} \}.
\end{equation}
Since $\PH(1)=1$, $n_0$ is correctly defined.
Also, we have $\PH(2\cdot 2^{n_k}) \le A \PH(2^{n_k}) \le A^{k+1}$
by \eqref{e_reg_df} and \eqref{e_nk_df}; hence, $n_{k+1}> n_k$
and $n_{k+\ell} - n_k \ge \ell$ for $\ell\in\Nbb$.
In what follows, we often use these properties without explicit reference.

Let the functions $u_{q,n}$ be those provided by Lemma~\ref{l_em}.
For $J\in\Nbb$, consider the series
\[
F_{q, j (x)} = \sum_{k=0}^\infty A^{Jk +j} u_{q, n_{Jk +j}}(x), \quad x\in\bd,\ q=1,2,\dots, Q,\ j=0,1,\dots, J-1.
\]
Estimates obtained below guarantee that
the above series uniformly converges on compact subsets of $\bd$; in particular,
$F_{q,j}$ is a harmonic function. The exact value of the constant $J$ will be selected later.

\subsection{Basic estimates}
Observe that it suffices to obtain the two-sided estimate
\begin{equation}\label{e_main_estimate}
\sum_{q=1}^Q \sum_{j=0}^{J-1} |F_{q,j}(x)| \asymp \PH \left(\frac{1}{1-|x|} \right)
\end{equation}
for $2^{-\alpha- n_{Jm+j+1}} \le 1-|x| \le 2^{-\alpha- n_{Jm+j}}$, $j=0,1,\dots, J-1$, $m=0,1,\dots$.

Indeed, \eqref{e_main_estimate} guarantees that
\[
1+ \sum_{q=1}^Q \sum_{j=0}^{J-1} |F_{q, j}(x)| \asymp \PH\left(\frac{1}{1-|x|} \right)
\]
for $1>|x|\ge 1- 2^{-\alpha -n_0}$, hence, for all $x\in\bd$.

\subsection{Lower estimate in \eqref{e_main_estimate}}\label{ss_low}
In fact, we are going to prove the following somewhat stronger property:
\begin{equation}\label{e_jlow}
\sum_{q=1}^Q |F_{j,q}(x)| \ge C \FF\left( \frac{1}{1-|x|}\right), \quad j=0,1,\dots, J-1,
\end{equation}
for a universal constant $C>0$
for $x\in\bd$ such that $2^{-\alpha- n_{Jm+j+1}} \le 1-|x| \le 2^{-\alpha- n_{Jm+j}}$, $m=0,1,\dots$.
So, fix an $m\in\{0,1,\dots\}$ and assume, without loss of generality, that $j=0$.
Now, consider a point $x$.

Since $1-|x| \le 2^{-\alpha-n_{Jm}}$, property \eqref{e_em2} guarantees that
 $|u_{\mathfrak{q}, n_{Jm}}(x)|\ge \frac{1}{4}$
for some $\mathfrak{q}=\mathfrak{q}(x)\in\{1,2,\dots, Q\}$.
Fix such a $\mathfrak{q}$ and consider the series
\[
F_\mathfrak{q}(x) = F_{\mathfrak{q}, 0}(x) = \sum_{k=0}^\infty A^{Jk} u_{\mathfrak{q}, n_{Jk}}(x).
\]
For a sufficiently large $J$, we will show that
\[
|F_{\mathfrak{q}}(x)| \ge C \PH\left( \frac{1}{1-|x|}\right),
\]
where $C>0$ is a universal constant.

Represent the series $F_{\mathfrak{q}}(x)$ as the sum of the following three functions:
\begin{equation}\label{e_sum3}
\sum_{k=0}^{m-1} + A^{Jm} u_{\mathfrak{q}, n_{Jm}}(x) + \sum_{k=m+1}^\infty := f_1(x) + f_2(x) + f_3(x).
\end{equation}
First, by \eqref{e_em1},
\[
|f_1(x)| \le \sum_{k=0}^{m-1} A^{Jk} \le A^{J(m-1) +1}.
\]
Second, by the definition of $\mathfrak{q}$,
\[
|f_2(x)| \ge \frac{A^{Jm}}{4}.
\]
Finally, given $p\in\Nbb$, property~\eqref{e_em3} guarantees that
\begin{align*}
|f_3(x)|
&\le C(p,d) \sum_{k=m+1}^\infty A^{Jk} 2^{-p n_{Jk}} (1-|x|)^{-p}
\\
&\le C(p,d) \sum_{k=m+1}^\infty A^{Jk} 2^{-p (n_{Jk} - n_{Jm +1})} 2^{-p n_{Jm + 1}} 2^{p(\alpha+ n_{Jm +1})}.
\end{align*}
Now, fix $p\in\Nbb$ such that $A < 2\cdot 2^p$. In particular, the constants $C(p,d)$ and $2^{p\alpha}$ are also fixed.
We claim that the series under consideration converges. Indeed, we have $n_{Jk} - n_{Jm +1} \ge J(k-m) -1$, thus,
\begin{align*}
|f_3(x)|
&\le  A^{Jm} C(p,d) 2^{p\alpha} \sum_{k=m+1}^\infty A^{J(k-m)} 2^{-p (n_{Jk} - n_{Jm +1})}
\\
&\le A^{Jm} C(p,d) 2^{p(\alpha+1)} \sum_{s=1}^\infty \left(\frac{A}{2^p} \right)^{Js}
\\
&\le A^{Jm} C(p,d) 2^{p(\alpha+1)} \sum_{s=1}^\infty \left(\frac{1}{2} \right)^{Js}.
\end{align*}
Since $p$ is fixed, we may select so large $J$ that
\[
C(p,d) 2^{p(\alpha+1)} \sum_{s=1}^\infty \left(\frac{1}{2} \right)^{Js} < \frac{1}{16}.
\]
Therefore, we have
\[
|f_3(x)| \le \frac{A^{Jm}}{16}.
\]

In sum,
we obtain
\begin{equation}\label{e_sum}
|F_{\mathfrak{q}}(x)| \ge \frac{A^{Jm}}{4} - A^{J(m-1) +1} - \frac{A^{Jm}}{16} \ge \frac{A^{Jm}}{8}
\end{equation}
for a sufficiently large $J$. Fix such a $J\in\Nbb$.
Note that the choice of appropriate $J$ does not depend on $m$.

Now, observe that $\FF$ is an increasing function,
hence, by \eqref{e_reg_df} and \eqref{e_nk_df},
\begin{equation}\label{e_FF}
\FF\left(\frac{1}{1-|x|} \right)\le \FF \left(2^{\alpha + n_{Jm +1}}\right)
\le A^\alpha\FF \left(2^{n_{Jm +1}}\right) \le A^{\alpha+1 +Jm}
\end{equation}
for $2^{-\alpha- n_{Jm+1}} \le 1-|x|$.

Finally, by \eqref{e_sum} and \eqref{e_FF}, we obtain
\[
\sum_{q=1}^Q |F_{q,0}(x)| \ge |F_{\mathfrak{q}}(x)| \ge \frac{A^{Jm}}{8}
\ge \frac{A^{-\alpha-1}}{8}\FF\left(\frac{1}{1-|x|} \right)
\]
for $2^{-\alpha- n_{Jm+1}} \le 1-|x| \le 2^{-\alpha- n_{Jm}}$.
In other words, the required lower estimate \eqref{e_jlow} holds for $j=0$ and $m=0,1,\dots$.
The parameter $J$ is fixed, so, the above
argument remains the same for arbitrary $j\in\{0,1,\dots, J\}$.
Therefore, the proof of the lower estimate in \eqref{e_main_estimate} is finished.

\subsection{Upper estimate in \eqref{e_main_estimate}}\label{ss_up}
We assume that the constants $J$ and $p$ are fixed according to the restrictions
of section~\ref{ss_low}; in particular, $A< 2\cdot 2^p$.

Given $m\in\{0,1,\dots\}$ and $j\in\{0,1, \dots, J-1\}$, suppose that
\[
2^{-\alpha - n_{Jm+j+1}} \le 1-|x| \le 2^{-\alpha - n_{Jm+j}}.
\]
Using \eqref{e_em1}, \eqref{e_em3} and the inequality $n_k- n_{Jm+j+1} \ge k-Jm-j-1$,
we obtain
\begin{equation}\label{e_QJ}
\begin{split}
\sum_{q=1}^Q
&\sum_{j=0}^{J-1} |F_{q, j}(x)|
\\
&\le
Q\left( \sum_{k=0}^{Jm+j} A^k + C(p,d)\sum_{k=Jm+j+1}^\infty A^k 2^{-p n_k} (1-|x|)^{-p}\right)
\\
&\le
Q\left( A^{Jm+j+1} + C(p,d) 2^{p\alpha}\sum_{k=Jm+j+1}^\infty A^k 2^{-p (n_k-n_{Jm+j+1})} \right)
\\
&\le Q\left( A^{Jm+j+1} + C(p,d) 2^{p\alpha}  A^{Jm+j+1} \sum_{s=0}^\infty \left(\frac{1}{2}\right)^s \right)
\\
&\le C(p, d, Q(d), \alpha(d)) A^{Jm+j+1}.
\end{split}
\end{equation}

Also, for $1-|x| \le 2^{-\alpha - n_{Jm+j}}$, we have
\begin{equation}\label{e_FFlow}
\FF\left( \frac{1}{1-|x|}\right) \ge \FF\left( 2^{\alpha+ n_{Jm+j}}\right)
\ge \FF\left( 2^{1+ n_{Jm+j}}\right) > A^{Jm+j}
\end{equation}
by the definition of $n_{Jm+j}$.

By \eqref{e_QJ} and \eqref{e_FFlow},
\[
\sum_{q=1}^Q \sum_{j=0}^{J-1} |F_{q, j}(x)| \le C(p, d, A) \PH\left(\frac{1}{1-|x|} \right)
\]
for all $x\in\bd$ under consideration.
In other words, the upper estimate in \eqref{e_main_estimate} holds.
The proof of Theorem~\ref{t_main}(iii) is finished.

\bibliographystyle{amsplain}

\end{document}